\providecommand{\tabularnewline}{\\}
\theoremstyle{plain}
\newtheorem{thm}{\protect\theoremname}
  \theoremstyle{plain}
\renewcommand{\fnum@figure}{\textbf{Fig.~\thefigure}}
\providecommand{\lemmaname}{Lemma}
\providecommand{\theoremname}{Theorem}
  \providecommand{\lemmaname}{Lemma}
\providecommand{\theoremname}{Theorem}
\begin{document}
\begin{frontmatter}{}

\title{What is the meaning of the graph energy after all?}

\author{Ernesto Estrada and Michele Benzi}

\address{Department of Mathematics \& Statistics, University of Strathclyde,\\
26 Richmond Street, Glasgow G11XQ, UK  

\medskip

Department of Mathematics and
Computer Science, Emory University,\\
 Atlanta, GA 30322, USA}

\begin{abstract}
For a simple graph $G=\left(V,E\right)$ with eigenvalues of the adjacency
matrix $\lambda_{1}\geq\lambda_{2}\geq\cdots\geq\lambda_{n}$, the
energy of the graph is defined by $E\left(G\right)=\sum_{j=1}^{n}\left|\lambda_{j}\right|$.
Myriads of papers have been published in the mathematical and chemistry
literature about properties of this graph invariant due to its connection
with the energy of (bipartite) conjugated molecules. However, a structural
interpretation of this concept in terms of the contributions of even
and odd walks, and consequently on the contribution of subgraphs,
is not yet known. Here, we find such interpretation and prove that
the (adjacency) energy of any graph (bipartite or not) is a weighted
sum of the traces of even powers of the adjacency matrix. We then
use such result to find bounds for the energy in terms of subgraphs
contributing to it. The new bounds are studied for some specific simple
graphs, such as cycles and fullerenes. We observe that including contributions
from subgraphs of sizes not bigger than 6 improves some of the best
known bounds for the energy, and more importantly gives insights about
the contributions of specific subgraphs to the energy of these graphs. 
\end{abstract}

\end{frontmatter}{}

\noindent{\bf Keywords:} graph energy, graph spectra, matrix functions, conjugated molecules.

\section{Introduction}

The concept of graph energy arose in the context of the study of conjugated
hydrocarbons using a tight-binding method known in chemistry as the
\textit{Hückel molecular orbital} (HMO) method (see for instance \citep{coulson1978huckel,dewar1969molecular}).
In this context, the total energy of a conjugated molecule $M$ is
defined by

\begin{equation}
E\left(M\right)=2\sum_{j:\lambda_{j}>0}\lambda_{j}=\sum_{j=1}^{n}\left|\lambda_{j}\right|,\label{eq:HMO energy}
\end{equation}
where the last equality is a consequence of the fact that such conjugated
molecules can be represented by bipartite graphs, thus the spectra
of their adjacency matrices are symmetric. Here, $\lambda_{1}\geq\lambda_{2}\geq\cdots\geq\lambda_{n}$
are the eigenvalues of the adjacency matrix of the (molecular) graph\textemdash typically
a simple, connected, undirected graph. It should be remarked that
such energy is given in units of a parameter known as $\beta$ which
has a negative value.

This concept was then generalized to any graph\textemdash not necessarily
bipartite\textemdash by Ivan Gutman, who named it the \textit{graph
energy} \citep{gutman1978energy}. Then, for a simple, undirected graph
$G=\left(V,E\right)$, the energy is defined as

\begin{equation}
E\left(G\right)=\sum_{j=1}^{n}\left|\lambda_{j}\right|.
\end{equation}

A myriad of papers and a couple of monographs have been written about
the graph energy \citep{gutman2001energy,Gutman2016energy,li2012graph}.
The monograph \citep{li2012graph} is an excellent compilation of
results, historical background and methodological approaches that
may serve as a guide to the reader who wants to get deeper into this
field. The concept has been generalized to other matrices apart from
the adjacency one \citep{nikiforov2007energy} (see also the corresponding
Chapters of the monographs \citep{Gutman2016energy,li2012graph}),
and many bounds and extremal properties have been reported for these
graph/matrix energies. Many researchers claim in their papers that
they are studying the graph energy because of the chemical implications
of that quantity. As soon as this concept is extended to non-bipartite
graphs, however, it completely loses all its chemical and physical
meaning. Nevertheless, as a graph invariant, the graph energy can
bring important structural information about the graph. But, the problem
is to know what exactly the graph energy means in terms of the structure
of a graph. Thus, after nearly 40 years of research on graph energy, what
is it after all?

Here we provide a structural interpretation of the graph energy using
the concept of matrix function (see next Section for formal definitions).
In particular, we prove that the graph energy is given by the sum
of the traces of the even powers of the adjacency matrix weighted
in a specific way. Using this new representation we find new bounds
for the energy as sums of contributions of subgraphs. Consequently,
armed with this structural interpretation the graph energy can now
be used in the general context of structural graph theory or even
to study some real-world graphs.

\section{Preliminaries}

We consider here simple, undirected, connected graphs $G=\left(V,E\right)$,
without multiple edges or self-loops. The adjacency matrix $A$ of
$G$ is then a square, symmetric matrix with spectral decomposition
$A=V\Lambda V^{T}$, where 
\[
V=\left[\begin{array}{ccc}
\vec{\psi}_{1} & \ldots & \vec{\psi}_{n}\end{array}\right]
\]
is the matrix of orthonormalized eigenvectors $\vec{\psi}_{j}$ associated
with the eigenvalues $\lambda_{j}$, and $\Lambda={\rm diag}\left(\lambda_{1},\ldots,\lambda_{n}\right)$.
If $f$ is a scalar function defined on the spectrum of $A$ we can
define a function of the matrix $A$, $f\left(A\right)$, by means
of 
\begin{equation}
f\left(A\right)=Vf\left(\Lambda\right)V^{T},
\end{equation}
where $f(\Lambda)={\rm diag}\left(f(\lambda_{1}),\ldots,f(\lambda_{n})\right)$.
For example, for any symmetric positive semidefinite matrix $A$ we
can define its (positive semidefinite) square root by means of $S=\sqrt{A}=V\sqrt{\Lambda}V^{T}$.
This is the only symmetric positive semidefinite matrix with the property
that $S^{2}=A$.

We observe that if $f$ is defined by a power series expansion of
the form 
\[
f(x)=\sum_{k=0}^{\infty}a_{k}x^{k}
\]
such that the series converges on an open disk containing the $\lambda_{j}$,
then the above definition is equivalent to 
\begin{equation}
f\left(A\right)=\sum_{k=0}^{\infty}a_{k}A^{k}.
\end{equation}

For further information on matrix functions, the reader is referred
to \citep{higham2008functions}.

\section{Main result}

The main result of this work is the finding that the (adjacency) graph
energy of any graph can be obtained as a weighted sum of even powers
of the adjacency matrix. First, we observe that 
\begin{equation}
E\left(G\right)={\rm tr}\left|A\right|,
\end{equation}
where $\left|A\right|=V|\Lambda|V^{T}$ stands for the absolute value
matrix function of $A$. Then, we have the following result. 
\begin{thm}
The energy of a graph is given by

\begin{equation}
E\left(G\right)=\lambda_{1}{\rm tr}\sum_{k=0}^{\infty}\left(\begin{array}{c}
\tfrac{1}{2}\\
k
\end{array}\right)\sum_{\ell=0}^{k}\left(-1\right)^{\ell}\left(\begin{array}{c}
k\\
\ell
\end{array}\right)\left(\dfrac{A}{\lambda_{1}}\right)^{2\ell}.\label{eq:main_result}
\end{equation}
\end{thm}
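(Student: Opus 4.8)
The plan is to route the graph energy through a matrix square root and then apply the Newton binomial series. Since $E(G)={\rm tr}\,|A|$ and $|A|=V|\Lambda|V^{T}$ is exactly the unique positive semidefinite square root of $A^{2}$ (because $|\lambda_{j}|=\sqrt{\lambda_{j}^{2}}$), the first step is the identity $E(G)={\rm tr}\,\sqrt{A^{2}}$. To obtain a convergent power series I would rescale by the spectral radius: for a connected graph with at least one edge (the edgeless case being trivial), Perron--Frobenius applied to the nonnegative matrix $A$ gives $\lambda_{1}=\rho(A)>0$, hence $|\lambda_{j}|\le\lambda_{1}$ for all $j$, so every eigenvalue of $A^{2}/\lambda_{1}^{2}$ lies in $[0,1]$. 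Writing
\[
\sqrt{A^{2}}=\lambda_{1}\sqrt{A^{2}/\lambda_{1}^{2}}=\lambda_{1}\sqrt{I-\bigl(I-A^{2}/\lambda_{1}^{2}\bigr)}
\]
puts the argument $S:=I-A^{2}/\lambda_{1}^{2}$ (again with spectrum in $[0,1]$) in position for the binomial expansion of $\sqrt{1-x}$.

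The second step is to justify the matrix identity $\sqrt{A^{2}/\lambda_{1}^{2}}=\sum_{k\ge0}\binom{1/2}{k}(-1)^{k}S^{k}$, i.e.\ the functional-calculus lift of the scalar Taylor series $\sqrt{1-x}=\sum_{k\ge0}\binom{1/2}{k}(-1)^{k}x^{k}$ applied to $S$. The scalar series has radius of convergence $1$, and since its coefficients decay like $k^{-3/2}$ it is absolutely convergent on the entire closed disk, in particular at the endpoint $x=1$, which is attained exactly when $0\in\operatorname{spec}(A)$. Diagonalising $S=QDQ^{T}$ and using $f(S)=Qf(D)Q^{T}$, the partial sums then converge entrywise to $\sqrt{A^{2}/\lambda_{1}^{2}}$ because each diagonal entry of $D$ lies in $[0,1]$. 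I expect this boundary case to be the only genuine obstacle: the usual ``$\rho<R$'' test for convergence of matrix power series does not apply when $0\in\operatorname{spec}(A)$, so the endpoint has to be argued directly (absolute convergence of the coefficient sequence, or an Abel-type limit), which is cleanest to do after passing to the eigenbasis so that the claim reduces to the scalar identity $\sqrt{\mu}=\sum_{k\ge0}\binom{1/2}{k}(-1)^{k}(1-\mu)^{k}$ for each $\mu=\lambda_{j}^{2}/\lambda_{1}^{2}\in[0,1]$.

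The third step is purely algebraic: expand $S^{k}=\bigl(I-A^{2}/\lambda_{1}^{2}\bigr)^{k}$ by the ordinary binomial theorem, using $\bigl(A^{2}/\lambda_{1}^{2}\bigr)^{\ell}=(A/\lambda_{1})^{2\ell}$, to obtain the inner finite sum $\sum_{\ell=0}^{k}(-1)^{\ell}\binom{k}{\ell}(A/\lambda_{1})^{2\ell}$; substituting this into the series of the second step, collecting the powers of $-1$, multiplying through by $\lambda_{1}$, and taking the trace term by term (legitimate since the series converges in operator norm) then gives \eqref{eq:main_result}. Steps one and three are routine bookkeeping with binomial coefficients and signs; the genuine difficulty, as noted, is handling the boundary of the disk of convergence of the square-root series when the adjacency matrix is singular.
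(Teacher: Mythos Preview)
Your proposal is correct and follows essentially the same route as the paper: rewrite $|A|=\sqrt{A^{2}}$, rescale by $\lambda_{1}$ so that the square root is taken at $I$ plus a matrix with spectrum in $[-1,0]$ (your $-S$ is exactly the paper's $B=(A/\lambda_{1})^{2}-I$), apply the Newton binomial series for $(1+x)^{1/2}$, and then expand each power by the ordinary binomial theorem. Your handling of the endpoint $x=1$ via the $k^{-3/2}$ decay of $\binom{1/2}{k}$ is in fact more careful than the paper's, which simply asserts convergence on $[-1,1]$.
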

\begin{proof}
We start by recalling that every symmetric positive semidefinite matrix
has a unique positive semidefinite square root. Then, we have that

\begin{equation}
\left|A\right|=\sqrt{A^{2}}.
\end{equation}

We now expand the square root in a power series in $A^{2}$. Let $\lambda_{1}>0$
be the largest eigenvalue of $A$. We note in passing that since $G$
is connected, $\lambda_{1}$ is simple. Then, $\tfrac{A}{\lambda_{1}}$
has spectral radius 1, and the matrix $B=\left(\lambda_{1}^{-1}A\right)^{2}-I$
has all its eigenvalues in the interval $\left[-1,0\right].$ Hence, $B$
is positive semidefinite and has spectral radius 1. Let us write 
\begin{equation}
\left|A\right|=\sqrt{A^{2}}=\lambda_{1}\sqrt{\left(\dfrac{A}{\lambda_{1}}\right)^{2}}=\lambda_{1}\sqrt{I+\left(\left(\dfrac{A}{\lambda_{1}}\right)^{2}-I\right)}=\lambda_{1}\left(I+B\right)^{\tfrac{1}{2}}.
\end{equation}

Recall now the following special case of the binomial theorem:

\begin{equation}
\sqrt{1+x}=\left(1+x\right)^{\tfrac{1}{2}}=\sum_{k=0}^{\infty}\left(\begin{array}{c}
\tfrac{1}{2}\\
k
\end{array}\right)x^{k},
\end{equation}
where the series converges for all $x\in\left[-1,1\right]$, and 
\begin{equation}
\left(\begin{array}{c}
\alpha\\
k
\end{array}\right):=\dfrac{\alpha\left(\alpha-1\right)\cdots\left(\alpha-k+1\right)}{k!}
\end{equation}
for any real $\alpha$ (here $\alpha=\tfrac{1}{2}$). Therefore we
can write 
\begin{equation}
\left|A\right|=\lambda_{1}\sum_{k=0}^{\infty}\left(\begin{array}{c}
\tfrac{1}{2}\\
k
\end{array}\right)B^{k}=\lambda_{1}\sum_{k=0}^{\infty}\left(\begin{array}{c}
\tfrac{1}{2}\\
k
\end{array}\right)\left(\left(\dfrac{A}{\lambda_{1}}\right)^{2}-I\right)^{k},
\end{equation}
which readily gives the desired result. 
\end{proof}
If we consider the first few terms of the expansion for $E$ we have
\begin{equation}
E={\rm tr}\left|A\right|=\lambda_{1}{\rm tr}\left[I+\dfrac{1}{2}\left(\dfrac{A^{2}}{\lambda_{1}^{2}}-I\right)-\dfrac{1}{2\cdot4}\left(\dfrac{A^{2}}{\lambda_{1}^{2}}-I\right)^{2}+\dfrac{1\cdot3}{2\cdot4\cdot6}\left(\dfrac{A^{2}}{\lambda_{1}^{2}}-I\right)^{3}-\cdots\right],
\end{equation}
which clearly indicates that the energy of a graph only depends on
even powers of the adjacency matrix of the corresponding graph. That
is, 
\begin{equation}
E={\rm tr}\left|A\right|=\lambda_{1}\left[\sum_{k=0}^{\infty}\left(\begin{array}{c}
2k\\
k
\end{array}\right)\dfrac{\left(-1\right)^{k+1}}{2^{2k}\left(2k-1\right)}{\rm tr}\left(\dfrac{A^{2}}{\lambda_{1}^{2}}-I\right)^{k}\right].
\end{equation}

\section{Further developments}

Here we use the main result in the previous section to obtain some
upper bounds for the energy of a graph. Our goal is not to give very
sharp bounds but to derive some that allow us to interpret the structural
meaning of the graph energy, with special emphasis on the molecular
energy in the HMO method. Recall that we have set 
\begin{equation}
B=\left(\dfrac{A^{2}}{\lambda_{1}^{2}}-I\right),
\end{equation}
and that $B$ is a negative semidefinite matrix (with spectrum in
$[-1,0]$). Furthermore, 
\begin{equation}
B_{ii}=\left(\dfrac{A^{2}}{\lambda_{1}^{2}}-I\right)_{ii}=\frac{k_{i}}{\lambda_{1}^{2}}-1,\label{eq:diag}
\end{equation}
where $k_{i}$ is the degree of the corresponding vertex. Clearly,
these diagonal terms are all negative. We now prove the following
result.
\begin{thm}
Let $G$ be a graph with $n$ nodes and $m$ edges. Then, 
\end{thm}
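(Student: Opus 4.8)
The plan is to establish an upper bound of the shape $E(G)\le \tfrac{n\lambda_{1}}{2}+\tfrac{m}{\lambda_{1}}$, obtained by truncating the series representation of $E(G)$ proved above at its term linear in $B$ and showing that this truncation overestimates the energy. The starting point is the identity $E(G)={\rm tr}\left|A\right|=\lambda_{1}\,{\rm tr}\,(I+B)^{1/2}$ derived in the proof of the main result, together with the structural facts already recorded: $B=\lambda_{1}^{-2}A^{2}-I$ is negative semidefinite with all eigenvalues in $[-1,0]$, and its diagonal entries are given by \eqref{eq:diag}.

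First I would isolate the elementary scalar inequality $\sqrt{1+t}\le 1+\tfrac{t}{2}$, valid for every $t\ge -1$ because $t\mapsto\sqrt{1+t}$ is concave and $1+\tfrac{t}{2}$ is its tangent line at $t=0$; equivalently, on $[-1,0]$ the binomial coefficients make every term of $\sum_{k\ge 0}\binom{1/2}{k}t^{k}$ beyond the constant nonpositive, so each partial sum overestimates the root. Next I would transfer this to a trace inequality: writing $\mu_{1},\dots,\mu_{n}\in[-1,0]$ for the eigenvalues of $B$, the functional calculus gives ${\rm tr}\,(I+B)^{1/2}=\sum_{j}\sqrt{1+\mu_{j}}\le\sum_{j}\bigl(1+\tfrac{\mu_{j}}{2}\bigr)=n+\tfrac12{\rm tr}\,B$. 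Then I would evaluate ${\rm tr}\,B$ from \eqref{eq:diag}, namely ${\rm tr}\,B=\sum_{i}\bigl(\tfrac{k_{i}}{\lambda_{1}^{2}}-1\bigr)=\tfrac{2m}{\lambda_{1}^{2}}-n$ using $\sum_{i}k_{i}=2m$, and substitute to obtain $E(G)\le\lambda_{1}\bigl(n+\tfrac12(\tfrac{2m}{\lambda_{1}^{2}}-n)\bigr)=\tfrac{n\lambda_{1}}{2}+\tfrac{m}{\lambda_{1}}$. For the equality discussion, equality in the tangent-line bound at a point forces that point to be $0$, so the trace bound is attained only when every $\mu_{j}=0$, i.e. $B=0$, i.e. $A^{2}=\lambda_{1}^{2}I$; since $G$ is connected this forces $\lambda_{1}$-regularity with spectrum $\{\pm\lambda_{1}\}$, which pins down $G=K_{2}$.

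I do not expect a serious obstacle here: the one step deserving care is the passage from the scalar inequality to the trace inequality, which should be written out through the spectral decomposition of $B$ rather than invoked as an operator inequality, since only the estimate after taking traces is needed. If a sharper version is wanted one can instead retain the quadratic term $-\tfrac18 B^{2}$ — still a legitimate upper bound on $[-1,0]$ by the same sign analysis — at the cost of expanding ${\rm tr}\,B^{2}=\lambda_{1}^{-4}{\rm tr}\,A^{4}-2\lambda_{1}^{-2}{\rm tr}\,A^{2}+n$ and rewriting ${\rm tr}\,A^{2}=2m$ and ${\rm tr}\,A^{4}$ through the usual closed-walk identities (edges, paths $P_{3}$, and $4$-cycles); this bookkeeping, rather than any inequality, would then be the main technical step, and it is also the point at which the "contributions of subgraphs" advertised in the introduction enter the bound.
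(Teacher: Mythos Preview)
Your proposal is correct and follows essentially the same route as the paper: both truncate the expansion of $\lambda_{1}\,{\rm tr}\,(I+B)^{1/2}$ after the linear term and argue that the discarded tail is nonpositive, then evaluate ${\rm tr}\,B$ via \eqref{eq:diag} and $\sum_i k_i=2m$. Your packaging via the scalar tangent-line inequality $\sqrt{1+t}\le 1+\tfrac{t}{2}$ applied eigenvalue-by-eigenvalue is slightly cleaner than the paper's sign-tracking through the binomial tail, and your equality discussion and preview of the quadratic refinement (which is exactly the paper's next theorem) are welcome additions.
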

\begin{equation}
E\left(G\right)\leq\left(\dfrac{\lambda_{1}}{2}\right)n+\left(\dfrac{1}{\lambda_{1}}\right)m.\label{eq:Bound1}
\end{equation}

\begin{proof}
It is easy to see that 
\begin{equation}
E\left(G\right)\leq\lambda_{1}{\rm tr}I=\lambda_{1}n,
\end{equation}
and that 
\begin{equation}
E\left(G\right)=\lambda_{1}\left[{\rm tr}I+\dfrac{1}{2}{\rm tr}\left(\dfrac{A^{2}}{\lambda_{1}^{2}}-I\right)-\left|\left(\dfrac{1}{8}{\rm tr}\left(\dfrac{A^{2}}{\lambda_{1}^{2}}-I\right)^{2}+\dfrac{3}{48}{\rm tr}\left(\dfrac{A^{2}}{\lambda_{1}^{2}}-I\right)^{3}+\cdots\right)\right|\right],
\end{equation}
which implies that 
\begin{equation}
E\left(G\right)\leq\lambda_{1}\left[{\rm tr}I+\dfrac{1}{2}{\rm tr}\left(\dfrac{A^{2}}{\lambda_{1}^{2}}-I\right)\right]=\lambda_{1}n+\left(\dfrac{1}{\lambda_{1}}\right)\sum_{i=1}^nk_{i}-\left(\dfrac{\lambda_{1}}{2}\right)n.
\end{equation}
The result is now an immediate consequence of (\ref{eq:diag}). 
\end{proof}
It can be easily verified that 
\begin{equation}
E\left(G\right)\leq\sqrt{2mn}\leq\left(\dfrac{\lambda_{1}}{2}\right)n+\left(\dfrac{1}{\lambda_{1}}\right)m,
\end{equation}
where the first bound is the well-known McClelland one \citep{mcclelland1971properties}.
However, we can systematically improve the bound found above by using
the same approach used for its proof, as we will show in the next
two results. 
\begin{thm}
Let $G$ be a graph with $n$ nodes, $m$ edges, $P_{3}$ paths of
three vertices and $C_{4}$ cycles of length four. Then, 
\end{thm}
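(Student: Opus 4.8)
The plan is to push the truncation argument behind~(\ref{eq:Bound1}) one term further. Keeping the notation $B=\lambda_{1}^{-2}A^{2}-I$, the expansion of ${\rm tr}\,\left|A\right|$ established in the previous section can be written as
\[
E\left(G\right)=\lambda_{1}\sum_{k=0}^{\infty}\binom{1/2}{k}\,{\rm tr}\left(B^{k}\right).
\]
Since $B$ is negative semidefinite with spectrum in $[-1,0]$, the number ${\rm tr}\left(B^{k}\right)=\sum_{j}\mu_{j}^{k}$ has the sign of $(-1)^{k}$, while $\binom{1/2}{k}$ has the sign of $(-1)^{k+1}$ for every $k\geq1$; hence $\binom{1/2}{k}{\rm tr}\left(B^{k}\right)\leq0$ for all $k\geq1$. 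Discarding all terms with $k\geq3$ and using $\binom{1/2}{1}=\tfrac12$, $\binom{1/2}{2}=-\tfrac18$, I first obtain
\[
E\left(G\right)\leq\lambda_{1}\left[{\rm tr}\,I+\tfrac12\,{\rm tr}\,B-\tfrac18\,{\rm tr}\left(B^{2}\right)\right].
\]

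The substance of the proof is the structural evaluation of the two traces. From~(\ref{eq:diag}) one has ${\rm tr}\,B=2m\lambda_{1}^{-2}-n$, and expanding $B^{2}=\lambda_{1}^{-4}A^{4}-2\lambda_{1}^{-2}A^{2}+I$ gives ${\rm tr}\left(B^{2}\right)=\lambda_{1}^{-4}\,{\rm tr}\left(A^{4}\right)-2\lambda_{1}^{-2}\,{\rm tr}\left(A^{2}\right)+n$, with ${\rm tr}\left(A^{2}\right)=2m$. The remaining ingredient is the walk-counting identity
\[
{\rm tr}\left(A^{4}\right)=2m+4P_{3}+8C_{4},
\]
which I would prove by classifying the closed walks of length $4$: $2m$ of them retrace a single edge back and forth twice; each $P_{3}$ accounts for exactly four of them (two having its central vertex in the middle slot, two having the central vertex as the starting point); each $4$-cycle accounts for eight (four starting vertices times two orientations); and no closed $4$-walk falls into more than one of these classes or into none of them. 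This yields ${\rm tr}\left(B^{2}\right)=\lambda_{1}^{-4}\left(2m+4P_{3}+8C_{4}\right)-4m\lambda_{1}^{-2}+n$.

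Feeding both expressions into the displayed inequality and collecting the coefficients of $\lambda_{1}n$, of $m/\lambda_{1}$, and of the $\lambda_{1}^{-3}$ terms yields, after simplification,
\[
E\left(G\right)\leq\frac{3\lambda_{1}}{8}\,n+\frac{3}{2\lambda_{1}}\,m-\frac{m+2P_{3}+4C_{4}}{4\lambda_{1}^{3}}.
\]
Because the extra retained contribution $-\tfrac18\lambda_{1}\,{\rm tr}\left(B^{2}\right)$ is $\leq0$, this refinement automatically improves on~(\ref{eq:Bound1}), realizing the systematic improvement announced just before the statement. I expect the only genuinely delicate step to be the identity for ${\rm tr}\left(A^{4}\right)$: one must check carefully that a $P_{3}$ contributes precisely four closed $4$-walks, that these are disjoint from the $2m$ edge-retracing walks and from the eight walks around each $4$-cycle, and that no other type of closed $4$-walk occurs, so that nothing is over- or under-counted. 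Everything else is routine bookkeeping with the leading binomial coefficients $\binom{1/2}{k}$.
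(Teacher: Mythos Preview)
Your argument is correct and follows essentially the same route as the paper: truncate the binomial series for ${\rm tr}\,|A|$ at the $B^{2}$ term, evaluate ${\rm tr}\,B$ and ${\rm tr}\,B^{2}$ via ${\rm tr}\,A^{2}=2m$ and ${\rm tr}\,A^{4}=2m+4P_{3}+8C_{4}$, and collect coefficients. The only differences are cosmetic: you justify the truncation directly via the sign pattern of $\binom{1/2}{k}\,{\rm tr}(B^{k})$ (the paper instead starts from~(\ref{eq:Bound1}) and subtracts the next term, which is equivalent), and you outline the closed-$4$-walk count yourself rather than citing it.
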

\begin{equation}
E\left(G\right)\leq\left(\dfrac{3\lambda_{1}}{8}\right)n+\left(\dfrac{6\lambda_{1}^{2}-1}{4\lambda_{1}^{3}}\right)m-\left(\dfrac{1}{2\lambda_{1}^{3}}\right)P_{3}-\left(\dfrac{1}{\lambda_{1}^{3}}\right)C_{4}.\label{eq:Bound2}
\end{equation}

\begin{proof}
It is easy to see that 
\begin{align}
{\rm tr}\left(B^{2}\right) & ={\rm tr}\left(\dfrac{A^{2}}{\lambda_{1}^{2}}-I\right)^{2}\nonumber \\
 & =\dfrac{1}{\lambda_{1}^{4}}{\rm tr}A^{4}-\dfrac{2}{\lambda_{1}^{2}}{\rm tr}A^{2}+{\rm tr}I.
\end{align}
We can now obtain the traces of $A^{4}$ and of $A^{2}$ in terms
of the subgraphs contributing to them. It is known (see, e.g., \citep[page 137]{estradaknightbook})
that 
\begin{equation}
{\rm tr}A^{4}=2m+4P_{3}+8C_{4},
\end{equation}
and of course 
\begin{equation}
{\rm tr}A^{2}=2m.
\end{equation}
Then, by plugging these two formulas into the expression for $\dfrac{\lambda_{1}}{8}{\rm tr}\left(B^{2}\right)$
we get 
\begin{equation}
\dfrac{\lambda_{1}}{8}{\rm tr}\left(B^{2}\right)=\left(\dfrac{\lambda_{1}}{8}\right)n+\left(\dfrac{1-4\lambda_{1}^{2}}{4\lambda_{1}^{3}}\right)m+\left(\dfrac{1}{2\lambda_{1}^{3}}\right)P_{3}-\left(\dfrac{1}{\lambda_{1}^{3}}\right)C_{4}.
\end{equation}
Finally, by taking

\begin{equation}
E\left(G\right)\leq\left(\dfrac{\lambda_{1}}{2}\right)n+\left(\dfrac{1}{\lambda_{1}}\right)-\dfrac{\lambda_{1}}{8}{\rm tr}\left(B^{2}\right)
\end{equation}
we obtain the result. 
\end{proof}
An important feature of this bound is that it clearly agrees with
the chemical intuition. For instance, it is well known that conjugated
$C_{4}$ cycles destabilize a molecule, due to their increase in the
molecular energy. We recall that the energy $E\left(G\right)$ for
a molecule is given in units of $\beta$ which is negative. Thus,
both terms $P_{3}$ and $C_{4}$ make contributions that increase
the total energy of a molecule.

We can improve the previous bound by using a similar approach. 
\begin{thm}
Let $G$ be a graph. Then,

\begin{align}
E\left(G\right) & \leq\left(\dfrac{5\lambda_{1}}{16}\right)n+\left(\dfrac{30\lambda_{1}^{4}-12\lambda_{1}^{2}+1}{16\lambda_{1}^{5}}\right)m-\left(\dfrac{5\lambda_{1}^{2}-3}{4\lambda_{1}^{5}}\right)P_{3}-\left(\dfrac{7\lambda_{1}^{2}-12}{4\lambda_{1}^{5}}\right)C_{4}\nonumber \\
 & +\left(\dfrac{3}{2\lambda_{1}^{5}}\right)C_{3}+\left(\dfrac{3}{4\lambda_{1}^{5}}\right)P_{4}+\left(\dfrac{3}{2\lambda_{1}^{5}}\right)S_{1,3}+\left(\dfrac{9}{2\lambda_{1}^{5}}\right)D_{4}+\left(\dfrac{3}{2\lambda_{1}^{5}}\right)F\nonumber \\
 & +\left(\dfrac{3}{2\lambda_{1}^{5}}\right)H+\left(\dfrac{3}{2\lambda_{1}^{5}}\right)C_{6}.\label{eq:Bound_3}
\end{align}
where $C_{n}$ and $P_{n}$ represent cycles and paths with $n$ vertices,
$S_{1,3}$ is the star subgraph with one central vertex and 3 pendant
ones, and $D_{4}$ is the diamond graph, i.e., a graph consisting
of $C_{4}$ in which an edge is added to one pair of nonadjacenct
vertices, $F$ is a subgraph consisting of one square with a pendant
node and $H$ is a subgraph consisting of two triangles sharing a
common node. 
\end{thm}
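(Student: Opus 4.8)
The plan is to rerun the truncation argument behind~(\ref{eq:Bound2}), carrying the binomial series one order further. Recall that $E(G)=\lambda_{1}\sum_{k\ge 0}\binom{1/2}{k}{\rm tr}(B^{k})$ with $B=(A/\lambda_{1})^{2}-I$ negative semidefinite. For $k\ge 1$ the coefficient $\binom{1/2}{k}$ is positive when $k$ is odd and negative when $k$ is even, while ${\rm tr}(B^{k})\le 0$ for $k$ odd and ${\rm tr}(B^{k})\ge 0$ for $k$ even; hence every summand with $k\ge 1$ is $\le 0$, and in particular the tail $\sum_{k\ge 4}\binom{1/2}{k}{\rm tr}(B^{k})$ is nonpositive. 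Therefore
\[
E(G)\le\lambda_{1}\Bigl[{\rm tr}\,I+\tfrac12{\rm tr}\,B-\tfrac18{\rm tr}\,B^{2}+\tfrac{1}{16}{\rm tr}\,B^{3}\Bigr],
\]
and since $\lambda_{1}\bigl[{\rm tr}\,I+\tfrac12{\rm tr}\,B-\tfrac18{\rm tr}\,B^{2}\bigr]$ is exactly the right-hand side of~(\ref{eq:Bound2}), it only remains to expand the cubic term $\tfrac{\lambda_{1}}{16}{\rm tr}\,B^{3}$ and add it.

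Next I would expand $B^{2}$ and $B^{3}$ in powers of $A$ using the ordinary binomial theorem, $(X-I)^{3}=X^{3}-3X^{2}+3X-I$ with $X=(A/\lambda_{1})^{2}$, so that
\[
{\rm tr}\,B^{3}=\frac{{\rm tr}\,A^{6}}{\lambda_{1}^{6}}-\frac{3\,{\rm tr}\,A^{4}}{\lambda_{1}^{4}}+\frac{3\,{\rm tr}\,A^{2}}{\lambda_{1}^{2}}-n,
\]
and similarly reuse the already-computed ${\rm tr}\,B^{2}=\lambda_{1}^{-4}{\rm tr}\,A^{4}-2\lambda_{1}^{-2}{\rm tr}\,A^{2}+n$ and ${\rm tr}\,B=\lambda_{1}^{-2}{\rm tr}\,A^{2}-n$. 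At this point the bound is entirely expressed through the spectral moments ${\rm tr}\,A^{2}$, ${\rm tr}\,A^{4}$ and ${\rm tr}\,A^{6}$.

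The decisive step is the combinatorial evaluation of these moments as weighted counts of closed walks, each closed walk being attributed to the connected subgraph formed by the edges it actually traverses. We already have ${\rm tr}\,A^{2}=2m$ and ${\rm tr}\,A^{4}=2m+4P_{3}+8C_{4}$; for the sixth moment one invokes the corresponding classical formula (see, e.g., \citep{estradaknightbook}) writing ${\rm tr}\,A^{6}$ as an integer combination of the numbers of edges, $P_{3}$, $P_{4}$, $S_{1,3}$, $C_{3}$, $C_{4}$, $C_{6}$, diamonds $D_{4}$, and the subgraphs $F$ and $H$---precisely the connected graphs admitting a closed spanning walk of length six, each appearing with multiplicity equal to the number of such walks on it. Substituting the three moment formulas into the displayed inequality, multiplying through by $\lambda_{1}$, and collecting the coefficient of each subgraph count yields~(\ref{eq:Bound_3}).

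The main obstacle is exactly this last ingredient: deriving, or correctly transcribing, the expansion of ${\rm tr}\,A^{6}$. In contrast to ${\rm tr}\,A^{4}$, here several non-isomorphic subgraphs occur, a closed walk of length six may use some edges two or three times, and one must enumerate only those walks whose edge set equals the subgraph under consideration, so that the subgraph types do not overlap or double count. Once the multiplicities of all the relevant subgraph types are pinned down, the remainder is routine algebra; it is also worth checking at the end that the coefficients obtained are consistent with $B\preceq 0$, i.e., with the inequality being a genuine overestimate of $E(G)$.
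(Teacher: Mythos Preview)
Your proposal is correct and follows essentially the same route as the paper: truncate the binomial series for $|A|$ after the $k=3$ term, expand ${\rm tr}\,B^{3}$ in powers of $A$, invoke the known subgraph expansion of ${\rm tr}\,A^{6}$ from \citep{estradaknightbook}, and add the resulting $\tfrac{\lambda_{1}}{16}{\rm tr}\,B^{3}$ correction to the bound~(\ref{eq:Bound2}). Your sign analysis (that $\binom{1/2}{k}{\rm tr}\,B^{k}\le 0$ for every $k\ge 1$ because $B\preceq 0$) is actually more explicit than what the paper writes, and the ingredient you flag as the main obstacle---the exact multiplicities in ${\rm tr}\,A^{6}=2m+12P_{3}+24C_{3}+48C_{4}+12S_{1,3}+6P_{4}+36D_{4}+12F+24H+12C_{6}$---is precisely what the paper quotes to finish.
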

\begin{proof}
It is easy to see that 
\begin{align}
{\rm tr}\left(B^{3}\right) & ={\rm tr}\left(\dfrac{A^{2}}{\lambda_{1}^{2}}-I\right)^{3}\nonumber \\
 & =\dfrac{1}{\lambda_{1}^{6}}{\rm tr}A^{6}-\dfrac{3}{\lambda_{1}^{4}}{\rm tr}A^{4}+\dfrac{3}{\lambda_{1}^{2}}{\rm tr}A^{2}-{\rm tr}I.
\end{align}
The expression for ${\rm tr}A^{6}$ in terms of subgraphs (see \citep[page 139]{estradaknightbook})
is given by:

\begin{equation}
{\rm tr}A^{6}=2m+12P_{3}+24C_{3}+48C_{4}+12S_{1,3}+6P_{4}+36D_{4}+12F+24H+12C_{6}.
\end{equation}
Then, by plugging these two formulas into the expression for $\dfrac{\lambda_{1}}{16}{\rm tr}\left(B^{3}\right)$
we get 
\begin{align}
\dfrac{\lambda_{1}}{16}{\rm tr}\left(B^{3}\right) & =\left(\dfrac{1-6\lambda_{1}^{2}-6\lambda_{1}^{4}}{16\lambda_{1}^{5}}\right)m+\left(\dfrac{3-3\lambda_{1}^{2}}{4\lambda_{1}^{5}}\right)P_{3}-\left(\dfrac{12-3\lambda_{1}^{2}}{4\lambda_{1}^{5}}\right)C_{4}\nonumber \\
 & +\left(\dfrac{3}{4\lambda_{1}^{5}}\right)C_{3}+\left(\dfrac{3}{8\lambda_{1}^{5}}\right)P_{4}+\left(\dfrac{3}{4\lambda_{1}^{5}}\right)S_{1,3}+\left(\dfrac{9}{4\lambda_{1}^{5}}\right)D_{4}+\left(\dfrac{3}{4\lambda_{1}^{5}}\right)F\nonumber \\
 & +\left(\dfrac{3}{2\lambda_{1}^{5}}\right)H+\left(\dfrac{3}{4\lambda_{1}^{5}}\right)C_{6}.
\end{align}
Then by taking 
\begin{equation}
E\left(G\right)\leq\left(\dfrac{3\lambda_{1}}{8}\right)n+\left(\dfrac{6\lambda_{1}^{2}-1}{4\lambda_{1}^{3}}\right)m-\left(\dfrac{1}{2\lambda_{1}^{3}}\right)P_{3}-\left(\dfrac{1}{\lambda_{1}^{3}}\right)C_{4}+\dfrac{\lambda_{1}}{16}{\rm tr}\left(B^{3}\right),
\end{equation}
we get the final result. 
\end{proof}
This result clearly indicates that subgraphs like $C_{6}$ contribute
to decreasing the energy of a graph. In molecular systems this is
an important result due to the well-know fact that benzenoid molecules,
which are constructed on the basis of fusing together $C_{6}$ fragments,
are very stable. However, the result shows also other fragments which
contribute to the stabilization of conjugated molecular systems as
the ones studied in the HMO context. This includes the fragment $P_{4}$
which obviously corresponds to the butadiene fragment and which is
easily recognizable as a stabilizing fragment. Other fragments appear
here in a more unexpected way, such as $C_{3}$, $D_{4}$, $F$ and
$H$.

\section{Bounding individual fragment contributions}

One important consequence of the findings of this paper is that we
can obtain bounds for the contribution of individual subgraphs to
the total graph energy. For instance, suppose that we are interested
in knowing how the subgraph $C_{8}$ contributes to $E\left(G\right)$.
Then, we can do the following. We first identify the first spectral
moment of the matrix $B$ in which $C_{8}$ contributes. That is,
$C_{8}$ appears for the first time in the term $\dfrac{{\rm tr}A^{8}}{\lambda_{1}^{8}}$
of $B^{4}$. Thus, let $\eta_{G}\left(C_{8}\right)$ be the contribution
of the cycle of 8 nodes to the total energy of a graph $G$ and let
$\eta_{8}\left(C_{8}\right)$ be the contribution of $C_{8}$ to the
8th spectral moment of $A$, i.e., $\eta_{8}\left(C_{8}\right)=16$.
Then,

\begin{equation}
\eta_{G}\left(C_{8}\right)\leq\lambda_{1}\left(\begin{array}{c}
2\cdot4\\
4
\end{array}\right)\dfrac{\left(-1\right)^{4+1}}{2^{2\cdot4}\left(2\cdot4-1\right)}\left(\dfrac{\eta_{8}\left(C_{8}\right)}{\lambda_{1}^{8}}\right)=-\left(\dfrac{5}{8\lambda_{1}^{7}}\right).
\end{equation}

The negative sign indicates that an octacycle increases the energy
of the graph. In the case of molecules treated under the HMO scheme,
it is well known that cycles with $4n$ atoms destabilize the molecule,
which is exactly the result obtained here. Of course, to improve this
bound it is necessary to find the contributions of this subgraph to
higher moments of the matrix $B$, but in this way it is at least
possible to obtain bounds and to analyze the chemical impact of such
fragments in a molecule.

\section{Numerical results}

In this section we show some numerical results on the different bounds
obtained in this paper for simple graphs of importance in chemistry.
The goal of these bound is not to obtain good approximations of the graph
energy for these graphs. Indeed, the direct calculation of the energy
for these graphs is easier than the calculation of the bounds. Our
goal is to show how the incorporation of certain subgraphs into the
bounds improves them and providing a structural interpretation of
the graph energy for such graphs. In all cases we compare our bounds
with the one of McClelland \citep{mcclelland1971properties}, which
is simple and remarkably good in approaching the graph energy. 

First we study a series of cycle graphs $C_{n}$ for $3\leq n\leq10$.
In Table 1 we give the values of the energy and the results
of bounding it with the three bounds obtained here as well as by McClelland's
one. As can be seen even for such simple graphs the current approach
needs to incorporate terms coming from the ${\rm tr}A^{6}$ in order
to improve McClelland's bound. In this type of graphs, a few of the
subgraphs contributing to (\ref{eq:Bound_3}) are not present, e.g.,
$S_{1,3}$, $D_{4}$, $F$, $H,$ and some of the others only appear in
specific graphs, such as $C_{3}$, $C_{4}$, and $C_{6}$. Thus, the
main improvement in this bound in relation to the other two comes
from the better account of the contributions of $n$, $m$ and $P_{3}$
and the newly introduced contribution of $P_{4}$.

\begin{center}
\begin{table}[H]
\begin{centering}
\begin{tabular}{|>{\centering}p{1cm}|>{\centering}p{1.2cm}|>{\centering}p{1.2cm}|>{\centering}p{1.2cm}|>{\centering}p{1.2cm}|>{\centering}p{1.2cm}|}
\hline 
$n$  & $E\left(G\right)$  & $\sqrt{2mn}$  & (\ref{eq:Bound1})  & (\ref{eq:Bound2})  & (\ref{eq:Bound_3})\tabularnewline
\hline 
\hline 
3  & 4.000  & 4.243  & 4.5  & 4.219  & 4.113\tabularnewline
\hline 
4  & 4.000  & 5.657  & 6  & 5.500  & 5.250\tabularnewline
\hline 
5  & 6.472  & 7.071  & 7.5  & 7.031  & 6.836\tabularnewline
\hline 
6  & 8.000  & 8.485  & 9  & 8.438  & 8.227\tabularnewline
\hline 
7  & 8.988  & 9.899  & 10.5  & 9.844  & 9.570\tabularnewline
\hline 
8  & 9.657  & 11.314  & 12  & 11.250  & 10.938\tabularnewline
\hline 
9  & 11.517  & 12.728  & 13.5  & 12.656  & 12.305\tabularnewline
\hline 
10  & 12.944  & 14.142  & 15  & 14.062  & 13.672\tabularnewline
\hline 
\end{tabular}
\par\end{centering}
{\caption{Values of the energy $E\left(G\right)$ and their estimation using
McClelland's bound $\sqrt{2mn}$ as well as the bounds obtained in
this work for the cycle graphs $C_{n}$ with different number of nodes. }
}
\label{Table1}
\end{table}
\par\end{center}

As a second example we study a series of fullerene graphs having from
20 to 540 nodes. The results are given in Table 2. Here again,
it is necessary to go beyond the contribution of ${\rm tr}A^{4}$
to make improvements over McClelland's bound. Here the main contributions
to this improvement are made by $n$, $m$, $P_{3}$, $P_{4}$, $S_{1,3}$,
and $C_{6}$. Notice, that the contributions of $C_{5}$ are only
captured after the consideration of ${\rm tr}A^{10}$, which is not
studied here. It can be then said that the energy of fullerenes is
bounded by the following specific expression:

\begin{align}
E\left(G\right) & \leq\left(\dfrac{5\lambda_{1}}{16}\right)n+\left(\dfrac{30\lambda_{1}^{4}-12\lambda_{1}^{2}+1}{16\lambda_{1}^{5}}\right)m-\left(\dfrac{5\lambda_{1}^{2}-3}{4\lambda_{1}^{5}}\right)P_{3} \nonumber \\
 & +\left(\dfrac{3}{4\lambda_{1}^{5}}\right)P_{4}+\left(\dfrac{3}{2\lambda_{1}^{5}}\right)S_{1,3}+\left(\dfrac{3}{2\lambda_{1}^{5}}\right)C_{6}.
\end{align}

\begin{center}
\begin{table}[t]
\begin{centering}
\begin{tabular}{|>{\centering}p{1cm}|>{\centering}p{1.4cm}|>{\centering}p{1.4cm}|>{\centering}p{1.4cm}|>{\centering}p{1.4cm}|>{\centering}p{1.4cm}|}
\hline 
$n$  & $E\left(G\right)$  & $\sqrt{2mn}$  & (\ref{eq:Bound1})  & (\ref{eq:Bound2})  & (\ref{eq:Bound_3})\tabularnewline
\hline 
\hline 
20  & 29.416  & 34.641  & 40  & 36.111  & 34.4753\tabularnewline
\hline 
24  & 36.022  & 41.569  & 48  & 43.330  & 41.376\tabularnewline
\hline 
26  & 39.742  & 45.0333  & 52  & 46.944  & 44.827\tabularnewline
\hline 
28  & 43.107  & 48.497  & 56  & 50.555  & 48.278\tabularnewline
\hline 
30  & 45.704  & 51.962  & 60  & 54.167  & 51.728\tabularnewline
\hline 
32  & 49.150  & 55.425  & 64  & 57.778  & 55.179\tabularnewline
\hline 
36  & 55.244  & 62.350  & 72  & 65.000  & 62.080\tabularnewline
\hline 
50  & 77.579  & 86.602  & 100  & 90.278  & 86.235\tabularnewline
\hline 
60  & 93.162  & 103.923  & 120  & 108.333  & 103.488\tabularnewline
\hline 
76  & 118.326  & 131.636  & 152  & 137.222  & 131.093\tabularnewline
\hline 
80  & 121.617  & 135.1  & 156  & 140.833  & 134.543\tabularnewline
\hline 
180  & 282.066  & 311.769  & 360  & 325.000  & 310.525\tabularnewline
\hline 
240  & 376.535  & 415.692  & 480  & 433.333  & 414.043\tabularnewline
\hline 
320  & 502.831  & 554.256  & 640  & 577.778  & 552.067\tabularnewline
\hline 
540  & 848.924  & 935.307  & 1080  & 975.000  & 931.636\tabularnewline
\hline 
\end{tabular}
\par\end{centering}
{\caption{Values of the energy $E\left(G\right)$ and their estimation using
McClelland's bound $\sqrt{2mn}$ as well as the bounds obtained in
this work for the fullerene graphs with different number of nodes. }
}
 \label{Table2}
\end{table}
\par\end{center}

\section{Conclusions}

The main conclusion of this work is that the graph energy is a weighted
sum of the traces of even powers of the adjacency matrix. The potential
advantages of this finding is that new techniques can be designed
to bound the energy of graphs, in which the specific contribution
of subgraphs can be obtained. This is of great importance in chemistry
where the search for additive rules for molecular properties is a
golden rule for understanding such properties in structural terms.
Finally, we hope that the new findings reported here might allow to
better understand certain properties of the graph energy in certain
families of graphs.

\vspace{0.2in}

\noindent \textbf{Acknowledgements}\\
 EE thanks the Royal Society of London for a Wolfson Merit Research
Award. The work of MB was supported by NSF grant DMS-1418889.

 \bibliographystyle{aps-nameyear}

\end{document}